\theoremstyle{plain}
\newtheorem{thm}{Theorem}[section]
\theoremstyle{definition}
\newtheorem{defi}[thm]{Definition}
\newtheorem{rem}[thm]{Remark}
\newtheorem{exm}[thm]{Example}
\newtheorem{cons}[thm]{Consequence}
\title{The concept of orthogonality in Cartan's geometry based on the concept of
area}
\author{Imsatfia Moheddine\footnote{Institut de Mathématiques de Jussieu, UMR 7586, Université Paris Diderot-Paris 7, Site de Chevaleret, 75013 Paris, France.}}
\begin{document}

\maketitle

\begin{abstract}
In 1931 Elie Cartan constructed a geometry which was rarely considered.
Cartan proposed a way to define an infinitesimal metric $ds$ starting from a variational problem on
hypersurfaces in an $n$-dimensional manifold $\mathcal{M}$. This distance depends
not only of the point $\textsc{m}\in\mathcal{M}$ but on the orientation of a hyperplane in
the tangent space $T_{\textsc{m}}\mathcal{M}$. His first step is a natural
definition of the orthogonal direction to such tangent hyperplane. In this
paper we extend it, starting form considerations from the calculus of variation.
\end{abstract}
\begin{LARGE}
\textbf{Introduction}\\
\end{LARGE}

Riemann considered the possibility to give to $ds$, the distance between two
infinitesimally close points, a much more general expression than
$\sqrt{g_{\imath\jmath}(dx^\imath,dx^\jmath)}$ namely to choose any function of
$x$ and $dx$ which is homogeneous of degree 1 in $dx$. This more general
geometry was later developped by P. Finsler, E. Cartan, \cite{Cartan1934}, more recently, by Chern, H.Rund and Bryant \cite{bryant2002}... .

In \cite{Cartan1933} Cartan proposed another generalisation of Riemannian geometry where the distance
between two infinitesimally closed points in $\mathcal{M}$ depends of the point $\textsc{m}$
\underline{and} of the choice of a hyperplane in the tangent space to
the manifold. In the modern language, this amounts to define a metric on the
vector bundle over the Grassmannian bundle of oriented hyperplanes,
$Gr_{n-1}(\mathcal{M})$ whose fiber at $\textsc{m}\in E$ is the set of oriented hyperplanes in $T_{\textsc{m}}\mathcal{M}$ (where $\textsc{m}\in\mathcal{M}$ and $E$ called \textit{``element''} by Cartan, denotes an oriented hyperplane in $T_{\textsc{m}}\mathcal{M}$). Moreover Cartan
found a way to canonically derive such a metric from a variational problem on
hypersurfaces in $\mathcal{M}$. He simultaneously defined a connection on this
bundle. The first step consists in choosing a natural definition for the orthogonal complement of an
element $E$ and the metric in the normal direction: The idea is to require that,
for any extremal hypersurface $\mathcal{H}$ of $\mathcal{M}$ and any compact
subset with a smooth boundary $\Sigma\in \mathcal{H}$ if we perform a
deformation of $\partial\Sigma$ in the \emph{normal direction} to $\Sigma$ and with an arbitrary indensity and consider
the family of extremal hypersurfaces spanned by the images of $\partial\Sigma$
by this deformations, then the area of hypersurfaces is stationary. This uses a
formula of De Donder (which is basically an extension to variational problems
with several variables of a basic formula in the theory of integral
invariants). Let us now present this idea for submanifolds of arbitrary codimension $n-p$. Such variational problem can be described as follows. Let $\beta$ be a $p$-form 
which, in local coordinates $x^1,...,x^n$, reads $\beta=dx^1\wedge...\wedge dx^p$. Any $p$-dimensional oriented submanifold $\mathcal{N}$ such that $\beta\vert_{\mathcal{N}}
>0$ can be locally represented as the graph of a function $f=(f^1,...,f^{n-p})$ of the variables $(x^1,...,x^p)$. We consider functional $\mathcal{L}$ of the form 
$\mathcal{L}(f):=\int_{\mathcal{N}} d\sigma$, when
\[
d\sigma=L\left(x^1,...,x^p,f^1,..f^{n-p},\nabla f\right)\beta.
\]
Let $\mathcal{N}$ the critical point of $\mathcal{L}$. To define the
orthogonal subspaces to all tangent subspaces to $\mathcal{N}$ the idea is to consider a $1$-parameter family $(\mathcal{N}_t)_t$ of submanifolds which form a foliation
of a submanifold $U$ of dimension $p+1$ in $\mathcal{M}$ and such that $\mathcal{N}_0=\mathcal{N}$. Consider a vector field $X$ on $U$ witch induces the variation from $\mathcal{N}_t$ to
$\mathcal{N}_{t+dt}$ and denote
\[
\mathcal{A}(t)=\mathcal{L}(f_t).
\]
According to Cartan \cite{Cartan1933} the condition for $X$ to be orthogonal to
$\mathcal{N}=\mathcal{N}_0$ is that the derivative of $\mathcal{A}(t)$ with
respect to $t$ at $t=0$ is zero. This will allow us to find the orthogonal
subspace. We will show that this definition of $E^{\perp}$ actually dose not depend on the choice of $\mathcal{N}$ but uniquelly on $E\in Gr_\textsc{m}^p\mathcal{M}$.

\section{Cartan geometry based on the concept of area}

Let $\mathcal{M}$ be a manifold of $n$-dimensional then we define the \emph{Grassmannian bundle} or \emph{Grassmannian} by
\[
Gr_p\mathcal{M}=\{(\textsc{m},E)\vert \textsc{m}\in \mathcal{M}; E \text{ an oriented $p$-dimensional }
\]
\[
\text{ vector subspace in }T_\textsc{m}\mathcal{M}\}.\ \ \ \ \ \ \ \ \ \ \ \ 
\]
And, if $\beta$ is a $p$-form which in local coordinates $(x^1,...,x^n)$, reads $\beta=dx^1\wedge...\wedge dx^p$ where $1\leq p\leq n-1$, then
\[
Gr_p^\beta\mathcal{M}=\{(\textsc{m},E)\in T_\textsc{m}\mathcal{M}\vert \beta=dx^1\wedge...\wedge
dx^p\vert_E>0\}.
\]
Let $(q^\jmath)_{1\leq\jmath\leq p(n-p)}$ be coordinate fonctions on $Gr_p^\beta\mathcal{M}$ such that 
$(x^\imath,q^\jmath)$ are local coordinates on $Gr_p^\beta\mathcal{M}$. We denote the projection $ \pi $ by:
\[
\pi:Gr_p^\beta\mathcal{M} \longrightarrow \mathcal{M}
\]
\[
\ \ \ \ (\textsc{m},E)\longmapsto \textsc{m}
\]
We consider $\pi^\ast T\mathcal{M}$ the bundle over the Grassmannian whose
fiber at $(\textsc{m},E)$ is $T_\textsc{m}\mathcal{M}$, we denote a metric $g$ on $\pi^\ast
T\mathcal{M}$ by

\[
g_{(\textsc{m},E)}=g_{\imath \jmath}(x^k,q^k)dx^\imath dx^\jmath,
\]
We see that the coefficients $g_{\imath\jmath}$ not only depend on coordinates
of $\textsc{m}$, but they also depend on the orientation of the element
at $\textsc{m}$.
\begin{rem}
If $p=n-1$ then
\[
Gr_{n-1}(\mathcal{M})\sim (T^\ast\mathcal{M}\setminus\{0\})\diagup\mathbb{R}^\ast.
\]
\end{rem}

\begin{defi}
A \textit{geometry based on the concept of area} $(\mathcal{M},F)$ is a
differential manifold $\mathcal{M}$ equipped with a function $F$ defined over
$T^\ast\mathcal{M}$ with values in $\mathbb{R}+$
\[
F:T^\ast\mathcal{M}\rightarrow \mathbb{R}+,
\]

which satisfies the following conditions:
\begin{enumerate}
 \item $F$ is $\mathcal{C}^\infty$ over
$T^\ast\mathcal{M}\setminus\{0\}:=\bigcup_{\textsc{m}\in\mathcal {M}}
T_\textsc{m}^\ast\mathcal{M}\setminus\{0\}$.
\item $F$ is homogeneous of degree one in $q^k$
\[
F(x^k,\lambda q^k)= \lambda F(x^k,q^k).
\]
\item The Hessian matrix defined by
\[
(g_{\imath \jmath}):=[\frac{1}{2}(F^2)_{q^\imath q^\jmath}]
\]
is positive definite at any point of $Gr_p(\mathcal{M})$.
\end{enumerate}
In other words, $F\mid_{T^\ast_\textsc{m}\mathcal{M}}$ is a Minkowski norm for all $\textsc{m}\in\mathcal{M}$.
\end{defi}

\section{The concept of orthogonality in the space of Cartan}
In the following, since we work locally we shall identity $\mathcal{M}$ with $\mathbb{R}^n$ to the coordinate system $(x^\imath)_\imath$.
\subsection{Lagrangian formulation}
Let $L:Gr_p^\beta(\mathbb{R}^p\times\mathbb{R}^{n-p}):=\{(x^1,...,x^n,(q^\imath_\jmath)
_{ \substack{ 1\leq\imath\leq n-p \\ 1\leq\jmath\leq p}})\longrightarrow\mathbb{R}\}$ be the Lagrangian function.
For any function $f:\Omega\subset\mathbb{R}^p\rightarrow\mathbb{R}^{n-p}$ of class
$\mathcal{C}^\infty$, we denote by $\Gamma_f$ its graph. A point $x\in\Gamma_f$ is defined by $(x^{p+1},...,x^n)=(f^1(x^1,...,x^p),...,f^{n-p}(x^1,...,x^p))$ and values of
the coordinates $(q^\imath_\jmath)$ at the tangent space to $\Gamma_f$ are given by $(\nabla f)(x)$. Let $\beta=dx^1\wedge...\wedge dx^p$ be a $p$-form, the action 
integral \cite{Helein2004} is given by
\[
\mathcal{L}(f)=\int_{\Omega} L(x^1,...,x^p,f^1, ...,f^{n-p},\nabla f)\beta=\int_{\Omega} L(x,f,\nabla f)\beta.
\]
The bundle over the Grassmannian of $\Gamma_f$ given by
\[
Gr_p^\beta(\Gamma_f):=\{(x,E);
x\in\Gamma_f,\ \ E = T_x\Gamma_f\}.
\]
\begin{defi}
Let $\Gamma$ be an oriented $p$-dimensional submanifold of $\mathcal{M}$ with boundary $\Gamma_0$ which is a critical point of $\mathcal{L}$. A distribution of vector lines $\mathcal{D}$ in $T\mathcal{M}$ along $\Gamma_0$ is called \emph{normal} if, for any vector field $N$ defined along $\Gamma_0$ such that $\forall \textsc{m}\in\Gamma_0, \ \ N(\textsc{m})\in
\mathcal{D}(\textsc{m})$, and if $\partial\Gamma_t:=\{e^{tN}(\textsc{m})\vert \textsc{m}\in\partial
\Gamma,t\in(-\varepsilon,\varepsilon)\}$ and $\mathcal{A}(t):=\mathcal{L}(\Gamma_t)$ then
$\frac{d}{dt}\left(\mathcal{A}(t)\right)\vert_{t=0}=0$.
\end{defi}
\begin{thm}
There exists a vector subbundle $\pi^\ast T^{\perp}\mathcal{M}$ of $\pi^\ast T\mathcal{M}$ of rank $n-p$ whose fiber at $(x,E)$ is denoted by $(\pi^\ast T^{\perp}\mathcal{M})_{(x,E)}$ such that for any oriented $p$-dimensional critical point $\Gamma$ of $\mathcal{L}$, a vector field $N$ along $\partial\Gamma$ is normal if and only if $N_x\in(\pi^\ast T^{\perp}\mathcal{M})_{(x,T_x\Gamma)}$. In the following we write $(\pi^\ast T^{\perp}\mathcal{M})_{(x,T_x\Gamma)}=(T_x\Gamma)^\perp$. Moreover $(T_x\Gamma)^\perp$ is spanned by $(v^1,...v^{n-p})$, where
\[
v^1=\left(
\begin{array}
l\frac{\partial L}{\partial q^1_1}\\
\vdots\\
\frac{\partial L}{\partial q^1_p}\\
-L+q^1_\jmath\frac{\partial L}{\partial q^1_\jmath}\\
0\\
0\\
\vdots\\
0\\
\end{array}
\right),
v^2=\left(
\begin{array}
l\frac{\partial L}{\partial q^2_1}\\
\vdots\\
\frac{\partial L}{\partial q^2_p}\\
0\\
-L+q^2_\jmath\frac{\partial L}{\partial q^2_\jmath}\\
0\\
\vdots\\
0\\
\end{array}
\right),
...,v^{n-p}=\left(
\begin{array}
l\frac{\partial L}{\partial q^{n-p}_1}\\
\vdots\\
\frac{\partial L}{\partial q^{n-p}_p}\\
0\\
0\\
\vdots\\
0\\
-L+q^{n-p}_\jmath\frac{\partial L}{\partial q^{n-p}_\jmath}\\
\end{array}
\right)
\]
\end{thm}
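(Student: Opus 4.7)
The plan is to reduce the variational condition $\tfrac{d\mathcal A}{dt}(0)=0$ to a pointwise linear system on the components of $N_x$, with coefficients depending only on $(x,E)\in Gr_p^\beta\mathcal M$, and then to solve the system explicitly. The two technical ingredients are the Poincar\'e--Cartan form attached to $L$ and the De Donder boundary-term formula for the first variation of a multiple integral; both fall under the ``formula of De Donder'' alluded to in the introduction.

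First I would introduce the shorthand $\pi^\imath_\jmath := \partial L/\partial q^\imath_\jmath$, $y^\imath := x^{p+\imath}$, $\beta_\jmath := \iota_{\partial/\partial x^\jmath}\beta$, and the Poincar\'e--Cartan form
\[
 \Theta_L = L\beta + \pi^\imath_\jmath\bigl(dy^\imath - q^\imath_k\,dx^k\bigr)\wedge\beta_\jmath
\]
on the 1-jet bundle. Its defining properties are $(j^1 f)^*\Theta_L = L(x,f,\nabla f)\,\beta$ and, for any vertical field $V$, $(j^1f)^*\iota_V d\Theta_L = 0$ whenever $f$ is an extremal (this is the jet-space form of the Euler--Lagrange equations). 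Applying Cartan's magic formula to a jet-bundle lift of $N$ and using the extremality of $\Gamma$, the bulk contribution vanishes and the first variation reduces to the De Donder boundary formula
\[
 \frac{d\mathcal A}{dt}(0) = \int_{\partial\Gamma}\iota_N\Theta_L.
\]

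Next I would expand this integrand in coordinates. Writing $\delta f^\imath := N^{p+\imath} - q^\imath_k N^k$ for the vertical part of $N$ relative to the graph and using the elementary identities $dx^k\wedge\beta_\jmath = \delta^k_\jmath\,\beta$ together with $dy^\imath\wedge\beta_\jmath\big|_E = q^\imath_\jmath\,\beta\big|_E$ on $E=T_x\Gamma$, the restriction of $\iota_N\Theta_L$ to $E$ decomposes in the basis $(\beta_\jmath|_E)_{\jmath=1,\dots,p}$ of $\Lambda^{p-1}E^*$ with coefficients
\[
 T_\jmath(N) = \pi^\imath_\jmath\,\delta f^\imath + L\,N^\jmath,\qquad \jmath=1,\dots,p.
\]
Because the hypersurface $\partial\Gamma\subset\Gamma$ and the scalar intensity of $N$ are both arbitrary, the integral $\int_{\partial\Gamma}\iota_N\Theta_L$ vanishes for every admissible deformation iff $T_\jmath(N)=0$ pointwise for each $\jmath$. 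The coefficients of this system involve only $L$, the $\pi^\imath_\jmath$ and the $q^\imath_\jmath$, so they depend solely on $(x,E)$; this exhibits the solution space as a subspace of $T_x\mathcal M$ intrinsically attached to $(x,E)\in Gr_p^\beta\mathcal M$, independent of the chosen extremal $\Gamma$. The positive-definiteness of $(g_{\imath\jmath})$ demanded in the definition of $(\mathcal M, F)$ forces the coefficient matrix to have maximal rank $p$, hence the solution space has dimension $n-p$; this yields the announced vector subbundle $\pi^*T^\perp\mathcal M$ of rank $n-p$.

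Finally I would exhibit the explicit basis. For each $I=1,\dots,n-p$ I would verify by direct substitution that the column vector $v^I$ listed in the statement solves $T_\jmath(v^I)=0$ for every $\jmath$, the essential cancellation being between the diagonal ``vertical'' contribution $\pi^I_\jmath(-L+q^I_k\pi^I_k)$ and the ``horizontal'' contribution $-\pi^I_\jmath q^I_k\pi^I_k + L\pi^I_\jmath$. Linear independence of the $v^I$ is transparent, since the single non-zero vertical entry $-L+q^I_\jmath\pi^I_\jmath$ of $v^I$ sits in the distinct row $p+I$. The main obstacle is the careful algebraic verification of $T_\jmath(v^I)=0$ in the higher-codimension case $n-p\ge 2$: here the interaction between the different vertical indices $\imath\ne I$ must be handled with the full multi-index structure of $\Theta_L$ rather than the simpler codimension-one Poincar\'e--Cartan form, and this bookkeeping is where essentially all the effort concentrates.
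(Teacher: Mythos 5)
Your overall route is the same as the paper's: compute the first variation, integrate by parts, use the Euler--Lagrange equations of the extremal $\Gamma$ to kill the interior term, reduce to the pointwise boundary system $T_\jmath(N)=\sum_\imath\pi^\imath_\jmath\,\delta f^\imath+LN^\jmath=0$, and solve it explicitly. Packaging the boundary term as $\int_{\partial\Gamma}\iota_N\Theta_L$ via the Poincar\'e--Cartan form is essentially cosmetic (the paper performs the same integration by parts in coordinates), while your observation that one must also vary the subdomain and the intensity to upgrade the single scalar condition $\langle (T_1,\dots,T_p),\nu\rangle=0$ to the full system $T_\jmath=0$ is in fact more careful than the paper, which only argues sufficiency at that point.

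The genuine gap sits exactly where you defer the work as ``where essentially all the effort concentrates'': the identity $T_\jmath(v^I)=0$ is false in general when $n-p\ge 2$. Substituting $v^I$ into your own system gives $(v^I)^k=\pi^I_k$, hence $\delta f^I=(-L+q^I_k\pi^I_k)-q^I_k\pi^I_k=-L$, but for $\imath\ne I$ one gets $\delta f^\imath=0-q^\imath_k\pi^I_k$, so that
\[
T_\jmath(v^I)=\pi^I_\jmath(-L)+L\,\pi^I_\jmath-\sum_{\imath\ne I}\pi^\imath_\jmath\,q^\imath_k\pi^I_k=-\sum_{\imath\ne I}\pi^\imath_\jmath\,q^\imath_k\pi^I_k .
\]
The cancellation you describe accounts only for the $\imath=I$ terms; the off-diagonal vertical indices contribute residual terms that do not vanish for a general Lagrangian (for the area functional with $n=4$, $p=2$ the residue is a nonzero multiple of $q^1_1q^2_1+q^1_2q^2_2$). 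Solving $T_\jmath(N)=0$ honestly, the $I$-th basis vector of the solution space has vertical components $N^{p+\imath}=-\delta^{\imath I}L+q^\imath_k\pi^I_k$, i.e.\ generically nonzero entries in \emph{every} vertical slot, not only the $I$-th one, so the $v^I$ of the statement do not lie in the solution space. (For what it is worth, the paper's own proof commits the identical omission when it substitutes $X^\jmath=\sum_k\lambda_k\,\partial L/\partial q^k_\jmath$ into its equation (3) and retains only the $k=\imath$ term; your proposal reproduces that gap rather than closing it.)
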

\begin{proof}
\textit{Consider first the case $p=2$ and $n=3$}\\ The Grassmaniann of dimension 5, the Lagrangian $(x,y,z,p,q)\mapsto L(x,y,z,p,q):=L(x,y,f(x,y),\nabla
f(x,y))$ and the action integral is given by
\[
\mathcal{L}(f)=\int_{\mathbb{R}^2} L(x,y,f(x,y),\nabla f(x,y))\beta
\]
Suppose that this integral is extended to a portion of extremal surface
$\Sigma$ limited by a contour $\mathcal{C}$, deform slightly $\Sigma$ to a surface
$\Sigma'$ limited by a contour $\mathcal{C}'$. This amounts to change in the
preceding integral $f$ into $f+\varepsilon g$ where $g$ has not necessarily a compact support.
Then we consider a family $(\Sigma_t)_t$ of surfaces with boundary which forms a
foliation of a domain $U\subset\mathbb{R}^3$ which coincides in $t=0$ with
$\Sigma$ and in $t=1$ with $\Sigma'$, depending of a real parameter $t\in[0,1]$.
We suppose that for all $t$, $(\Sigma_t)_t$ is a critical point of $\mathcal{L}$ that we will represent by the graph $\Sigma_t$ of a function $f_t:\Omega_t\rightarrow \mathbb{R}$
\[
\Sigma_t=\left\{(x,y,f_t(x,y))\backslash (x,y)\in \Omega_t\right\}
\]

Let $X$ a vector field defined on $U$ such that, if $e^{sX}$ is the flow of $u$, then:
\[
e^{sX}(\Sigma_t)=\Sigma_{t+s}.
\]
Note
\[
\left\{
\begin{array}
lf(t,x,y)=f_t(x,y)\\
f(x,y)=f(0,x,y)=f_0(x,y)\\
\Phi(t,x,y)=e^{tX}(x,y,f(x,y))
\end{array}
\right.
\]
if $t=0$ we have $\Phi=f=f_0$ and $\forall t\in ]0,1],$ the function
$(x,y)\mapsto\Phi(t,x,y)$ is a parametrization of $\Sigma_t$, we denote by
$\Phi=(\phi^1,\phi^2,\phi^3)$ and $\phi^3(t,x,y)=f(t,\phi^1(t,x,y),\phi^2(t,x,y))$ so, if we derive with respect to $t$ then
\[
\frac{\partial\phi^3}{\partial t}=\frac{\partial f}{\partial
t}(t,\phi^1,\phi^2)+\frac{\partial f}{\partial x}(t,\phi^1,\phi^2)\frac{\partial
\phi^1}{\partial t}+\frac{\partial f}{\partial y}(t,\phi^1,\phi^2)\frac{\partial
\phi^2}{\partial t}.
\]
which gives for $t=0$
\[
X^3(x,y,f)=\frac{\partial f}{\partial t}(0,x,y)+\frac{\partial f}{\partial
x}(0,x,y)X^1(x,y,f) +\frac{\partial f}{\partial y}(0,x,y)X^2(x,y,f).
\]
Thus along $\Sigma=\Sigma_0$, we have:
\begin{equation}
\frac{\partial f}{\partial t}=X^3-X^1\frac{\partial f}{\partial
x}-X^2\frac{\partial f}{\partial y}
\end{equation}
Let the Lagrangian $(x,y,z,p,q)\mapsto L(x,y,z,p,q)$ and we consider
\[
\mathcal{A}(t)=\int_{\Omega_t}L(x,y,f_t(x,y),\frac{\partial f_t}{\partial
x}(x,y),\frac{\partial f_t} {\partial y}(x,y))dxdy
\]
Assuming that $\Omega_t$ is regular (ie $\partial\Omega_t$ is a curve
$\mathcal{C}^1$ of plan $\mathbb{R}^2$) then we have
\[
\frac{d\mathcal{A}(t)}{dt}=\int_{\Omega_t}\frac{\partial}{\partial
t}L(x,y,f_t,\frac{\partial f_t}{\partial x},\frac{\partial f_t} {\partial
y})dxdy+\int_{\partial\Omega_t}L(x,y,f_t,\frac{\partial f_t}{\partial
x},\frac{\partial f_t} {\partial y})\langle(X^1,X^2),\nu\rangle d\ell
\]
Where $\nu$ is a exterior normal of $\Omega_t$ in $\mathbb{R}^2$ and
$\langle(X^1,X^2),\nu\rangle$ is the change in the area of $\Omega_t$ and
$d\ell$ is a measure of one dimension $\partial\Omega$, thus

\[
\frac{d\mathcal{A}(t)}{dt}\mid_{t=0}=\int_{\Omega_0}\frac{\partial}{\partial
t}L(x,y,f_t,\frac{\partial f_t}{\partial x},\frac{\partial f_t} {\partial
y})dxdy+\int_{\partial\Omega_0}L(x,y,f_t,\frac{\partial f_t}{\partial
x},\frac{\partial f_t} {\partial y})\langle(X^1,X^2),\nu\rangle d\ell
\]
\[
=\int_{\Omega}\frac{\partial L}{\partial z}\frac{\partial f}{\partial
t}+\frac{\partial L}{\partial p} \frac{\partial^2f}{\partial x\partial
t}+\frac{\partial L}{\partial q} \frac{\partial^2f}{\partial y\partial
t}+\int_{\partial\Omega}L\langle(X^1,X^2),\nu\rangle d\ell
\]
\[
=\int_\Omega\frac{\partial L}{\partial z}\frac{\partial f}{\partial
t}+\frac{\partial}{\partial x}\left (\frac{\partial L}{\partial p}\frac{\partial
f}{\partial t}\right)+\frac{\partial}{\partial y}\left( \frac{\partial
L}{\partial q}\frac{\partial f}{\partial t}\right)-\frac{\partial f}{\partial
t}\left[ \frac{\partial}{\partial x}\left(\frac{\partial L}{\partial p}\right)
+\frac{\partial}{\partial y}\left( \frac{\partial L}{\partial q}\right)\right]
\]
\[
+\int_{\partial\Omega}L\langle(X^1,X^2),\nu\rangle d\ell.
\]

\[
=\int_{\Omega}\frac{\partial f}{\partial t}\left[\frac{\partial L}{\partial
z}-\frac {\partial}{\partial x}\left(\frac{\partial L}{\partial
p}\right)-\frac{\partial}{\partial y}\left( \frac{\partial L}{\partial
q}\right)\right]+\int_{\partial\Omega} \left\langle  \left (\frac{\partial
L}{\partial p}\frac{\partial f}{\partial t},\frac{\partial L}{\partial q}
\frac{\partial f}{\partial t}\right),\nu\right\rangle d\ell
\]
\[
+\int_{\partial\Omega}L\langle(X^1,X^2),\nu\rangle d\ell.
\]
\[
=\int_{\Omega}\frac{\partial f}{\partial t}\left[\frac{\partial L}{\partial z}
-\frac{\partial}{\partial x}\left(\frac{\partial L}{\partial p}\right)
-\frac{\partial}{\partial y}\left(\frac{\partial L}{\partial q}\right)\right]
+\int_{\partial\Omega}\left\langle\left(\frac{\partial L}{\partial
p}\frac{\partial f}{\partial t}+LX^1,\frac{\partial L}{\partial q}\frac{\partial
f}{\partial t}+LX^2\right),\nu\right\rangle d\ell
\]
Or we know that $\Sigma=\Sigma_0$ is a critical point of $\int_\Omega L$, then
the Euler Lagrange equations are verified, thus
\[
\frac{d\mathcal{A}(t)}{dt}\mid_{t=0}=\int_{\partial\Omega}\left\langle\left(\frac{\partial
L}{\partial p}\frac{\partial f}{\partial t}+LX^1,\frac{\partial L}{\partial
q}\frac{\partial f}{\partial t}+LX^2\right),\nu\right\rangle dl
\]
We now assum that $X\vert_{\partial\Gamma}$ has the form $\psi N_0\in\mathcal{D}$ where $\psi\in\mathcal{C}^\infty (\partial\Gamma)$ with values in $\mathbb{R}$ and where $N_0$ is a fixed non-vanishing tangent defined along $\partial\Gamma$, to be determined we seek a condition to $N_0$ such that for any regular function $\psi$, $ \frac{d\mathcal{A}(t)}{dt}\mid_{t=0}=0$. We can choose a function $f_t$ depends of $\psi$ such that $\frac{\partial f_{t,\psi}}{\partial t}\mid_{t=0}=\psi\frac {\partial f_t}{\partial t}\mid_{t=0}$ we replace this in the previous integral, thus
\[
\frac{d\mathcal{A}(t)}{dt}\mid_{t=0}=\int_{\partial\Gamma}\left\langle\left(\frac{\partial
L}{\partial p}\psi\frac{\partial f}{\partial t}+\psi LN_0^1,\psi\frac{\partial
L}{\partial q}\frac{\partial f}{\partial t}+\psi LN_0^2\right),\nu\right\rangle dl\ \ \ \ \ \ \ \
\]
\[
=\int_{\partial\Gamma}\psi\left\langle\left(\frac{\partial L}{\partial
p}\frac{\partial f}{\partial t}+LN_0^1,\frac{\partial L}{\partial q}\frac{\partial
f}{\partial t}+ LN_0^2\right),\nu\right\rangle dl.
\]
The condition for $\frac{d\mathcal{A}(t)}{dt}\mid_{t=0}=0$ for all $\psi$
regular function on $\partial\Gamma$ and $\nu$ exterior normal of $\Gamma$ is
that $\left\langle\left(\frac{\partial L}{\partial p}\frac{\partial f}{\partial
t}+LN_0^1,\frac{\partial L}{\partial q}\frac{\partial f}{\partial t}+ LN_0^2\right), \nu\right\rangle=0$. If we denote by $\lambda=\frac{-\frac{\partial f}{\partial t}}{L}$, then this enthusiasm that
\[
\left\{
\begin{array}{c}
N_0^1=\lambda\frac{\partial L}{\partial p}\\
\\
N_0^2=\lambda\frac{\partial L}{\partial q}
\end{array}
\right.
\]
Form (1), we have
\[
N_0^3=-\lambda L+\lambda\frac{\partial L}{\partial p}\frac{\partial f}{\partial
x}+\lambda\frac{\partial L}{\partial q}\frac{\partial f}{\partial y}.
\]
Thus, so that $\frac{d\mathcal{A}(t)}{dt}\mid_{t=0}=0$ it suffices that
\[
X=\left(\frac{\partial L}{\partial p},\frac{\partial L}{\partial
q},p\frac{\partial L}{\partial p}+q\frac{\partial L}{\partial q}-L\right).
\]

\textit{\textbf{Lets now $n>0$ and $p=n-1$}}\\ The grassmanian bundle is of dimension $2n-1$.
It is the same as previous, thus the orthogonal of $\pi^\ast T\mathcal{M}$ of rank $n-1$ is spanned by
\begin{equation}
X=\left(\frac{\partial L}{\partial p^1},...,\frac{\partial L}{\partial
p^{n-1}}, \sum_{\imath=1}^{n-1}p^\imath\frac{\partial L}{\partial
p^\imath}-L\right),
\end{equation}
where $p^\imath=\frac{\partial f}{\partial x_\imath}$ for $\imath=1,...n-1$, and $L$ be the Lagrangian on $Gr_{n-1}(\Sigma)$.\\

\textit{\textbf{Now the case $n>3$ and $p<n$}}\\
For $1\leq p\leq n-1$ let $\Omega$ be a regular open set of $\mathbb{R}^p$ and $f=(f^1,... f^{n-p}):\Omega\longrightarrow\mathbb{R}^{n-p}$, we denote its graph by
\[
\mathcal{N}:=\{(x,f(x))\mid x\in\Omega\}.
\]
Let $\beta=dx^1\wedge...\wedge dx^p$ be a $p$-form, and $L$ be the Lagrangian on $Gr^\beta_p(\mathcal{N})$, thus the action integral is given by
\[
\mathcal{L}(f):=\int_{\mathcal{N}}L(x^1,...,x^p,f^1,...,f^{n-p},\nabla f)\beta.
\]
The family $(\mathcal{N}_t)_t$ of submanifolds of board form a foliation in
a submanifold $U\subset\mathbb{R}^{p+1}$, we suppose that for all $t$, $\mathcal{N}_t$ is a
critical point of $\mathcal{L}$. Let $X$ be a vector field defined on $U$ such that, if $e^{sX}$ is the flow of $u$, then
$e^{sX}(\mathcal{N}_t)=\mathcal{N}_{t+s}$, denote
\[
\left\{
\begin{array}
lf(t,x^1,...,x^p)=f_t(x^1,...x^p)\Leftrightarrow
f^\imath(t,x^1,...,x^p)=(f^\imath) _t(x^1,...x^p)\forall \imath=1,...n-p,\\
\\
f(x^1,...,x^p)=f(0,x^1,...,x^p)=f_0(x^1,...x^p)\Leftrightarrow \forall
\imath=1,...n-p \text { we have }\\
f^\imath(x^1,...,x^p) =f^\imath(0,x^1,...,x^p)=(f^\imath)_0(x^1,...x^p),\\
\\
\Phi(t,x^1,...,x^p)=e^{tX}(x^1,...,x^p,f^1,...,f^{n-p}),
\end{array}
\right.
\]
The function $\Phi$ is the parametrization of $\mathcal{N}_t$, we denote:
\[
\left\{
\begin{array}
l \Phi=(\varphi^1,...,\varphi^p,\varphi^{p+1},...,\varphi^n),\\
\varphi^{p+\imath}(t,x^1,...,x^p)=f^\imath(t,\varphi^1,...,\varphi^p) \text{
for } \imath=1,...n-p,
\end{array}
\right.
\]
Thus, $\forall \imath=1,...n-p$:
\[
\frac{\partial \varphi^{p+\imath}}{\partial t}=\frac{\partial
f^\imath}{\partial t}
(t,\varphi^1,...,\varphi^p)+\sum_{\jmath=1}^p\frac{\partial f^\imath}{\partial
x^\jmath} (t,\varphi^1,...,\varphi^p) \frac{\partial \varphi^\jmath}{\partial
t}.
\]
For $t=0$, $\forall \imath=1,...n-p$, thus
\[
X^{p+\imath}(x,f)=\frac{\partial f^\imath}{\partial t}
(0,x^1,...,x^p) +\sum_{\jmath=1}^p\frac{\partial f^\imath}{\partial x^\jmath}
(0,x^1,...,x^p) X^\jmath(x,f),
\]
this given along $\mathcal{N}=\mathcal{N}_0$ and $\forall \imath=1,...n-p$
\begin{equation}
\frac{\partial f_\imath}{\partial
t}=X^{p+\imath}-\sum_{\jmath=1}^p\frac{\partial f_\imath}{\partial
x^\jmath}X^\jmath.
\end{equation}
We have
\[
\mathcal{A}(t)=\mathcal{L}(f_t)=\int_{\mathcal{N}_t}L(x^1,...,x^p,(f^1)_t,...,(f^{n-p})_t,\nabla
f_t)\beta,
\]
thus
\[
\frac{d\mathcal{A}(t)}{\partial t}=\int_{\mathcal{N}_t}\frac{\partial}{\partial t}
L(x^1,...,x^p,(f^1)_t,...,(f^{n-p})_t, \nabla f_t)\beta\ \ \ \ \ \ \ \ \ \ \ \ \ \ \ \ \ \ \ \ \ \ \ \ \ \ \ \ \ \ \ \
\]
\[
+\int_{\partial\mathcal{N}_t}L(x^1,...,x^p,(f^1)_t,...,(f^{n-p})_t,\nabla f_t)
\langle(X^1,...,X^p),\nu\rangle d\ell,
\]
where $\nu$ is the exterior normal to $\mathcal{N}_t$ in $\mathbb{R}^n$,
$\langle(X^1,...,X^p) ,\nu\rangle$ represents the change in volume of $\mathcal{N}_t$
and $d\ell$ is a measure of one dimension $\partial\mathcal{N}_t$. Thus for $t=0$ we have
\[
\frac{d\mathcal{A}(t)}{\partial
t}\mid_{t=0}=\int_{\mathcal{N}_t}\frac{\partial}{\partial t}
L(x^1,...,x^p,(f^1)_t,...,(f^{n-p})_t, \nabla f_t)\beta
+\int_{\partial\mathcal{N}_t}L\langle(X^1,...,X^p),\nu\rangle d\ell.
\]
We calculate $\frac{d\mathcal{A}(t)}{\partial t}\mid_{t=0}$
\[
\int_{\mathcal{N}}\frac{\partial}{\partial t} L(x,f_t,\nabla f_t)\beta
=\int_{\mathcal{N}}\left(\sum_{\imath=1}^{n-p}\frac{\partial L}{\partial
x^\imath}\frac{\partial f^\imath} {\partial t}+\sum_{\substack{1\leq\jmath\leq p
\\ 1<\imath\leq n-p}}\frac{\partial F}{\partial p^\imath_
\jmath}\frac{\partial^2f^\imath}{\partial x^\jmath\partial t}\right)\beta \ \ \ \ \ \ \ \ \ \ \ \ \ \ \ \ \ \ \ \ \ \ \ \ \ \ \ \ \ \ \
\]
\[
=\sum_{\imath=1}^{n-p}\int_{\mathcal{N}}\left(\frac{\partial L}{\partial x^\imath}
\frac{\partial f^\imath} {\partial t}+\sum_{\jmath=1}^p\frac{\partial
F}{\partial p^\imath_\jmath} \frac{\partial^2f^\imath}{\partial x^\jmath\partial
t}\right)\beta \ \ \ \ \ \ \ \ \ \ \ \ \ \ \ \ \ \ \ \ \ \ \ \ \ \ \ \ \ \ \  \ \ \ \ \ \ \ \ \ \ \ \ \ \ \ \ \ \ \ \ \ \ \ \ \ \ \ \ \ \ \
\]
\[
=\sum_{\imath=1}^{n-p}\int_{\mathcal{N}}\left(\frac{\partial L}{\partial x^\imath}
\frac{\partial f^\imath} {\partial t}+\sum_{\jmath=1}^p\frac{\partial}{\partial
x^\jmath} \left(\frac{\partial L}{\partial q^\imath_\jmath}\frac{\partial
f^\imath} {\partial t}\right)-\frac{\partial f^\imath}{\partial
t}\sum_{\jmath=1}^p\frac{\partial}{\partial x^\jmath} \left(\frac{\partial
L}{\partial q^\imath_\jmath}\right)\right) \beta \ \ \ \ \ \ \ \ \ \ \ \ \ \ \ \ \ \ \ \ \ \ \ \ \ \ \ \ \ \ \
\]
\[
=\sum_{\imath=1}^{n-p}\int_{\mathcal{N}}\frac{\partial f^\imath}{\partial
t}\left[\frac{\partial L}{\partial
x^\imath}-\sum_{\jmath=1}^p\frac{\partial}{\partial x^\jmath}\left(\frac
{\partial L}{\partial q^\imath_\jmath}\right)\right]\beta+\sum_{\imath=1}^{n-p}
\int_{\partial{\mathcal{N}}}\left\langle \left(\frac{\partial L}{\partial
q^\imath_1}\frac{\partial f^\imath} {\partial t},..., \frac{\partial L}{\partial
q^\imath_p}\frac{\partial f^\imath} {\partial t}\right),\nu\right \rangle d\ell
\]
We have $\mathcal{N}=\mathcal{N}_0$ is a critical point of $\mathcal{L}$, thus the
Euler Lagrange equations are verified $\frac{\partial L}{\partial
x^\imath}-\sum_{\jmath=1}^p\frac{\partial}{\partial x^\jmath}\left(\frac
{\partial L}{\partial q^\imath_\jmath}\right)=0$ this gives
\[
  \frac{d\mathcal{A}(t)}{\partial t}\mid_{t=0}=\sum_{\imath=1}^{n-p}
\int_{\partial\Omega} \left\langle \left(\frac{\partial L}{\partial
q^\imath_1}\frac{\partial f_\imath} {\partial t}+LX^1,..., \frac{\partial
L}{\partial q^\imath_p}\frac{\partial f_\imath} {\partial
t}+LX^p\right),\nu\right\rangle d\ell
\]
Using the previous definition we can also consider a regular function $\psi:\partial\mathcal{N}\rightarrow\mathbb{R}$ such that $f_{t,\psi}$ such
that $\frac{\partial f^\imath_{t,\psi^i}}{\partial t}=\psi^i\frac{\partial f^\imath_t}{\partial
t}$. By the same as the previous, so that
$\frac{d\mathcal{A}(t)}{\partial t}\mid_{t=0} =0$, it suffices that for all
$\jmath=1,...p$ we have
\[
\sum_{\imath=1}^{n-p}\frac{\partial L}{\partial q^\imath_\jmath}\frac{\partial
f^\imath}{\partial t}+LX^\jmath=0
\]
if we denote $\lambda_\imath=\psi^i\frac{-\frac{\partial f^\imath}{\partial t}}{L}$ and $\nabla f:=\left(\frac{\partial f^\imath}{\partial
x^\jmath}\right)_{\substack {1<\imath\leq n-p \\ 1\leq\jmath\leq p}}=(q^\imath_\jmath)_{\substack{1<\imath\leq n-p \\ 1\leq\jmath\leq p}}$, then
\[
X^\jmath=\sum_{\imath=1}^{n-p}\lambda_\imath\frac{\partial L}{\partial
q^\imath_\jmath} \ \ \ \ \text{ for all }\jmath=1,...p,
\]
from (3), for $\imath=1,...n-p$ thus
\[
X^{p+\imath}=-\lambda_\imath L+\sum_{\jmath=1}^p\lambda_\imath
q^\imath_\jmath\frac{\partial L}{\partial q^\imath_\jmath},
\]
wich gives $X=(X^1,...,X^n)$ with,
\[
\left\{
\begin{array}
l X^1=\lambda_1\frac{\partial L}{\partial q^1_1}+ \lambda_2\frac{\partial
L}{\partial q^2_1}+...+\lambda_{n-p}\frac{\partial L}{\partial q^{n-p}_1}\\
\vdots\\
X^p=\lambda_1\frac{\partial L}{\partial q^1_p}+\lambda_2\frac{\partial
L}{\partial q^2_p}+...+\lambda_{n-p}\frac{\partial L}{\partial q^{n-p}_p}\\
X^{p+1}=\lambda_1\left(-L+q^1_1\frac{\partial L}{\partial
q^1_1}+...+q^1_p\frac{\partial L}{\partial q^1_p}\right)\\
\vdots\\
X^n=\lambda_{n-p}\left(-L+q^{n-p}_1\frac{\partial L}{\partial
q^{n-p}_11}+...+q^{n-p}_p\frac{\partial L}{\partial q^{n-p}_p}\right)
\end{array}
\right.
\]
\[
=\lambda_1\left(
\begin{array}
l\frac{\partial L}{\partial q^1_1}\\
\vdots\\
\frac{\partial L}{\partial q^1_p}\\
-L+q^1_\jmath\frac{\partial L}{\partial q^1_\jmath}\\
0\\
0\\
\vdots\\
0\\
\end{array}
\right)
+\lambda_2\left(
\begin{array}
l\frac{\partial L}{\partial q^2_1}\\
\vdots\\
\frac{\partial L}{\partial q^2_p}\\
0\\
-L+q^2_\jmath\frac{\partial L}{\partial q^2_\jmath}\\
0\\
\vdots\\
0\\
\end{array}
\right)
+...+ \lambda_{n-p}\left(
\begin{array}
l\frac{\partial L}{\partial q^{n-p}_1}\\
\vdots\\
\frac{\partial L}{\partial q^{n-p}_p}\\
0\\
0\\
\vdots\\
0\\
-L+q^{n-p}_\jmath\frac{\partial L}{\partial q^{n-p}_\jmath}\\
\end{array}
\right)
\]
$=\lambda_1v^1+\lambda_2v^2+...+\lambda_{n-p}v^{n-p}$, so the theorem is
proved.\\

\end{proof}
\begin{exm}

In the case $n=3$ and $p=2$, the functional area is
$L(x,y,z,p,q)=\sqrt{1+p^2+q^2}$  then we find
\[
X=\left(\frac{p}{\sqrt{1+p^2+q^2}},\frac{q}{\sqrt{1+p^2+q^2}},\frac{p^2}{\sqrt{1+p^2+q^2}}+\frac{q^2}{\sqrt{1+p^2+q^2}}-\sqrt{1+p^2+q^2}\right)
\]
\[
=\frac{1}{\sqrt{1+p^2+q^2}}(p,q,-1)
\]
which have the same direction as the normal $n$ to the hypersurface $\Sigma$
spanned by $(1,0,p)$ and $(1,0,q)$ in the classical euclidean sens.
\end{exm}

\begin{exm}
If we take $n=4$ and $p=2$ then $f:\mathbb{R}^2\rightarrow\mathbb{R}^2$, $\Sigma_t$
are a domains with boundary of dimension 2 of $\mathbb{R}^4$, we define the
functional area by
$L(x,y,f_1,f_2,q^1_1,q^1_2,q^2_1,q^2_2):=\sqrt{(q^1_1)^2+(q^1_2)^2+(q^2_1)^2+(q^2_2)^2+(q^1_1q^2_2-q^1_2q^2_1)^2}$,
an easy calculation gives us that the normal subspace to $\mathcal{N}_x$ is
$\mathcal{V}=(v^1,v^2)$ with
\[
v^1=
\left(
\begin{array}
l
\frac{q^1_1+q^2_2(q^1_1q^2_2-q^1_2q^2_1)}{\sqrt{(q^1_1)^2+(q^1_2)^2+(q^2_1)^2+(q^2_2)^2+(q^1_1q^2_2-q^1_2q^2_1)^2}}\\

\frac{q^1_2-q^2_1(q^1_1q^2_2-q^1_2q^2_1)}{\sqrt{(q^1_1)^2+(q^1_2)^2+(q^2_1)^2+(q^2_2)^2+(q^1_1q^2_2-q^1_2q^2_1)^2}}\\

-\frac{(q^2_2)^2+(q^2_1)^2}{\sqrt{(q^1_1)^2+(q^1_2)^2+(q^2_1)^2+(q^2_2)^2+(q^1_1q^2_2-q^1_2q^2_1)^2}}\\

\ \ \ \ 0\\
\end{array}
\right),
v^2=
\left(
\begin{array}
l
\frac{q^2_1-q^1_2(q^1_1q^2_2-q^1_2q^2_1)}{\sqrt{(q^1_1)^2+(q^1_2)^2+(q^2_1)^2+(q^2_2)^2+(q^1_1q^2_2-q^1_2q^2_1)^2}}\\

\frac{q^2_2+q^1_1(q^1_1q^2_2-q^1_2q^2_1)}{\sqrt{(q^1_1)^2+(q^1_2)^2+(q^2_1)^2+(q^2_2)^2+(q^1_1q^2_2-q^1_2q^2_1)^2}}\\

\ \ \ \ 0\\
-\frac{(q^1_1)^2+(q^1_2)^2}{\sqrt{(q^1_1)^2+(q^1_2)^2+(q^2_1)^2+(q^2_2)^2+(q^1_1q^2_2-q^1_2q^2_1)^2}}\\

\end{array}
\right)
\]
then
\[
\mathcal{V}=\frac{1}{L}\left(\left(
\begin{array}
l q^1_1+q^2_2(q^1_1q^2_2-q^1_2q^2_1)\\
q^1_2-q^2_1(q^1_1q^2_2-q^1_2q^2_1)\\
-(q^2_2)^2-(q^2_1)^2\\
\ \ \ \ 0\\
\end{array}
\right),
\left(
\begin{array}
l q^2_1-q^1_2(q^1_1q^2_2-q^1_2q^2_1)\\
q^2_2+q^1_1(q^1_1q^2_2-q^1_2q^2_1)\\
\ \ \ \ 0\\
-(q^1_1)^2-(q^1_2)^2\\
\end{array}
\right)\right)
\]

\end{exm}
\begin{rem}
The tangent space to $\Sigma_M$ in neighborhood of a point $M$ is spanned by
\[
T\Sigma_M=\langle\left(
\begin{array}{c}
1\\
0\\
q^1_1\\
q^2_1
\end{array}
\right), \ \
\left(
\begin{array}{c}
0\\
1\\
q^1_2\\
q^2_2
\end{array}
\right)\rangle
\]
Then, in the Euclidean case, the subspace orthogonal to $\Sigma_M$ is spanned by

\[
(T\Sigma_M)^\bot=\langle\left(
\begin{array}{c}
q^1_1\\
q^1_2\\
-1\\
0
\end{array}
\right), \ \
\left(
\begin{array}{c}
q^2_1\\
q^2_2\\
0\\
-1
\end{array}
\right)\rangle
\]
which coincides with our result when
\[
\left\{
\begin{array}{c}
(q^1_1)^2+(q^1_2)^2=(q^2_2)^2+(q^2_1)^2=1\\
\\
q^1_1q^2_2-q^1_2q^2_1=0 \ \ \ \ \ \ \ \ \ \ \ \ \ \ \ \ \ \
\end{array}
\right.
\]
In other form
\[
\left(
\begin{array}{cc}
q^1_1&q^1_2\\
\\
q^2_1&q^2_2
\end{array}
\right)=
\left(
\begin{array}{cc}
\cos\theta&\sin\theta\\
\\
\cos\theta&\sin\theta\\
\end{array}
\right)
\text{ or }
\left(
\begin{array}{cc}
q^1_1&q^1_2\\
\\
q^2_1&q^2_2
\end{array}
\right)=
\left(
\begin{array}{cc}
-\cos\theta&\sin\theta\\
\\
-\cos\theta&\sin\theta\\
\end{array}
\right)
\]
So we can see the difference between our space and an euclidean space.
\end{rem}
\section{Determination of the normal unit vector to a hypersurface}

\begin{thm}
In the space of $n$ dimension the determinant of Gram calculates the volume
$\mathcal{V}$ of a parallelepiped formed by $n$ vectors $\xi_1,...,\xi_n$ with
$\xi_\imath=\xi_\imath^\jmath e_\jmath$ for $\imath, \jmath=1,...,n$ if the espace is equiped by a metric $g$, then
\[
\mathcal{V}=\sqrt{G(\xi_1,...,\xi_n)}.
\]
where
\[
G(\xi_1,..., \xi_n):=
\left\vert
\begin{array}{ccc}
g(\xi_1,\xi_1)&\ldots&g(\xi_1,\xi_n)\\
\vdots&\ddots&\vdots\\
g(\xi_n,\xi_1)&\ldots&g(\xi_n,\xi_n)\\
\end{array}
\right\vert
\]

\end{thm}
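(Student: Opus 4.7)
The plan is to reduce the identity $\mathcal{V}=\sqrt{G(\xi_1,\ldots,\xi_n)}$ to a purely algebraic statement about the matrix of coordinates of the $\xi_\imath$'s, by writing the Gram matrix as a triple product and using multiplicativity of the determinant.

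First, I would fix the basis $(e_\jmath)_{1\le\jmath\le n}$ and denote $g_{\jmath k}:=g(e_\jmath,e_k)$, so that the symmetric matrix $(g_{\jmath k})$ encodes the metric. Writing $\xi_\imath=\xi_\imath^\jmath e_\jmath$ and using bilinearity,
\[
g(\xi_\imath,\xi_l)=g_{\jmath k}\xi_\imath^\jmath \xi_l^k.
\]
If I introduce the matrix $\Xi:=(\xi_\imath^\jmath)$ whose columns are the coordinates of the $\xi_\imath$ and $G_e:=(g_{\jmath k})$, this reads $(g(\xi_\imath,\xi_l))=\Xi^{T}G_e\,\Xi$. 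Taking determinants,
\[
G(\xi_1,\ldots,\xi_n)=\det(\Xi^{T}G_e\Xi)=(\det\Xi)^{2}\det G_e.
\]

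Next I would interpret both sides geometrically. Since $\det G_e>0$ (the metric is positive definite), the Riemannian volume form associated with $g$ in the coordinates $(e_\jmath)$ is $\sqrt{\det G_e}\,dx^{1}\wedge\cdots\wedge dx^{n}$; equivalently, the volume of the unit parallelepiped $(e_1,\ldots,e_n)$ equals $\sqrt{\det G_e}$. The parallelepiped spanned by the $\xi_\imath$ is the image of the unit one under the linear map of matrix $\Xi$, which multiplies volumes by $|\det\Xi|$. Hence
\[
\mathcal{V}=|\det\Xi|\,\sqrt{\det G_e}.
\]
Squaring and comparing with the formula for $G(\xi_1,\ldots,\xi_n)$ above yields $\mathcal{V}^{2}=G(\xi_1,\ldots,\xi_n)$, and extracting the positive square root gives the claim.

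The only genuinely non-trivial point is the volume normalisation $\mathcal{V}(e_1,\ldots,e_n)=\sqrt{\det G_e}$, i.e.\ ensuring that the notion of volume being used is indeed the one induced by the metric. If one prefers to avoid invoking the Riemannian volume form, I would instead do a preliminary change of basis: apply Gram--Schmidt to $(e_\jmath)$ to obtain an orthonormal basis $(\tilde e_\jmath)$, rewrite the coordinates of $\xi_\imath$ in that basis as a new matrix $\widetilde\Xi$, observe that in an orthonormal basis the volume of a parallelepiped is by definition $|\det\widetilde\Xi|$ and the Gram matrix becomes $\widetilde\Xi^{T}\widetilde\Xi$, so that $G=(\det\widetilde\Xi)^{2}$ and the formula follows at once. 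This second route is the one I expect to be cleanest and least prone to circularity.
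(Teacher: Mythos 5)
Your proof is correct, but it takes a genuinely different route from the paper. The paper argues by induction on $n$: the base case $n=1$ is immediate, and the inductive step writes the volume of the $(n+1)$-dimensional parallelepiped as ``volume of the base spanned by $\xi_1,\dots,\xi_n$ times the height of $\xi_{n+1}$,'' then invokes point~3 of the Consequence that follows the theorem, namely $G(v,\xi_1,\dots,\xi_n)=\sqrt{g(v,v)}\,G(\xi_1,\dots,\xi_n)$ for $v$ orthogonal to the base, to convert that product back into a Gram determinant. You instead prove the identity in one shot by factoring the Gram matrix as $\Xi^{T}G_e\Xi$, taking determinants to get $G=(\det\Xi)^2\det G_e$, and identifying $|\det\Xi|\sqrt{\det G_e}$ with the metric volume (or, in your alternative, passing to an orthonormal basis by Gram--Schmidt so that $G=\widetilde\Xi^{T}\widetilde\Xi$). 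Your argument is self-contained and avoids the paper's slightly awkward forward reference to a consequence whose proof is declared obvious; it also makes explicit where the normalisation $\mathcal{V}(e_1,\dots,e_n)=\sqrt{\det G_e}$ enters, which the paper leaves implicit. What the paper's inductive ``base times height'' argument buys in exchange is a geometric picture that ties directly into the orthogonal-decomposition identities (points 1--3 of the Consequence) used in the subsequent computation of the normal vector's length. Both are valid; yours is the more standard and more robust derivation of the Gram-determinant volume formula.
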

\begin{proof}
The proof of this theorem is made by reccurence on $n$.\\
If $n=1$ obvious case, assuming the property is true for any family of $n$
vectors, and prove to $n+1$. The volume $\upsilon$ of the parallelepiped $n+1$
dimension is by defintion the volume of the basis F, space generated by the
first $n$ vectors of volume equal  $\sqrt{G(\xi_1,...,\xi_n)}$, by assumption of
recurrence, multiplied by the height of $\xi_{n+1}$. Then
$\upsilon=\sqrt{G(\xi_1,...,\xi_{n+1})}$ due to the third point of the following
consequence.
\end{proof}

\begin{cons}
\begin{enumerate}
\item The volume constructed on $n$ vectors of the same origin is equal to the
determinant formed by the components of these vectors, multiplied by the volume
constructed by $n$ unit vectors.
\[
\mathcal{V}(\xi_1,...,\xi_n)=\mathcal{V}\left(\frac{\xi_1}{\mid\mid\xi_1\mid\mid},...,\frac{\xi_n}{\mid\mid\xi_n\mid\mid}\right)\mid\xi_1,...,\xi_n\mid
\]

\item In the case where the vectors $\xi_\imath$ are unit we have
$g(\xi_\imath,\xi_\jmath)= g_{\imath\jmath}$ then the volume formed by $n$ unit
vectors is:
\[
G(\xi_1,...,\xi_n)=\sqrt{g}
\]
\item Let $v$ the orthogonal vector to parallelepiped formed by $n$ vectors
$\xi_1,...,\xi_n$ then we have:
\[
G(v,\xi_1,...,\xi_n)=\sqrt{g(v,v)}G(\xi_1,...,\xi_n)
\]
\end{enumerate}
\end{cons}

\begin{proof}
2 and 3 are abvious, we proved only 1.\\
We have $g(\xi_\imath,\xi_\jmath)=~^t\xi_\imath g_{\imath\jmath}\xi_\jmath$ if
we suppose $\xi_\imath= (\xi^1_\imath,...,\xi^n_\imath)$ then:
\[
G(\xi_1,...,\xi_n)=\left\vert\left(
\begin{array}{ccc}
\xi^1_1&\ldots&\xi^n_1\\
\vdots&\ddots&\vdots\\
\xi^1_n&\ldots&\xi^n_n\\
\end{array}
\right)\left(
\begin{array}{ccc}
g_{11} &\ldots &g_{1n}\\
\vdots &\ddots &\vdots\\
g_{n1} &\ldots &g_{nn}\\
\end{array}
\right)
\left(
\begin{array}{ccc}
\xi^1_1&\ldots&\xi^1_n\\
\vdots&\ddots&\vdots\\
\xi^n_1&\ldots&\xi^n_n\\
\end{array}
\right)\right\vert
\]
So using 2, we can find 1.
\end{proof}
\begin{defi}
If we denote $(e^\ast_1,...,e^\ast_n)$ the dual basis in the espace $E$ of $n$
dimension, let $n$ vectors $\xi_1,\xi_2,...,\xi_n$
if we considered that $1\leq\imath_1<...<\imath_p\leq n$ we define by
$e^\ast_{\imath_1}\wedge...\wedge e^\ast_{\imath_p}(\xi_1,
\xi_2,...,\xi_p)$ the determinant of the matrix of order $p$ formed by
${\imath_k}^{th}$ component of vectors $\xi_\jmath$ where $\jmath,k=1,...p$ and $\imath_k\in\{1,...n\}$:
\[
e^\ast_{\imath_1}\wedge...\wedge e^\ast_{\imath_p}(\xi_1,\xi_2,...,\xi_p)=
\left\vert
\begin{array}{ccc}
\xi^{\imath_1}_1&\ldots&\xi^{\imath_1}_p\\
\vdots&\ddots&\vdots\\
\xi^{\imath_p}_1&\ldots&\xi^{\imath_p}_p\\
\end{array}
\right\vert.
\]
\end{defi}

\begin{thm}
The length $\ell$ of the normal vector $v$ to the hypersurface $\Sigma$ is
\[
\sqrt{g}.
\]
\end{thm}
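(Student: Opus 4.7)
The plan is to compute the Cartan volume of the parallelepiped $(v,\xi_1,\ldots,\xi_{n-1})$ in two different ways, where $\xi_j=(0,\ldots,1,\ldots,0,p^j)$ (with the $1$ in position $j$ and $p^j=\partial f/\partial x^j$) are the standard tangent vectors to the hypersurface $\Sigma=\Gamma_f$, and where $v$ is the normal given by formula (2) of the previous theorem in the hypersurface case $p=n-1$.

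First I would compute the ordinary (Euclidean) determinant of the $n\times n$ matrix $M=[v\,|\,\xi_1\,|\,\cdots\,|\,\xi_{n-1}]$ whose columns are these vectors in the coordinate basis. Because the $\xi_j$ have an identity-block structure in the first $n-1$ rows, the row operation replacing row $n$ by (row $n$) $-\sum_{i=1}^{n-1}p^i\,(\text{row }i)$ annihilates all entries of row $n$ except the first, which becomes $\bigl(\sum_i p^i\partial_{p^i}L-L\bigr)-\sum_i p^i\partial_{p^i}L=-L$. Expansion along that row then yields $|\det M|=L$ immediately. By Consequence 3.2(1)--(2), the Cartan volume of the parallelepiped spanned by $(v,\xi_1,\ldots,\xi_{n-1})$ equals $|\det M|\cdot\sqrt{g}=L\sqrt{g}$.

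Next, since the previous theorem establishes that $v\in(T_x\Sigma)^{\perp}$, Consequence 3.2(3) applies, giving
\[
\sqrt{G(v,\xi_1,\ldots,\xi_{n-1})}=\sqrt{g(v,v)}\cdot\sqrt{G(\xi_1,\ldots,\xi_{n-1})}=\ell\cdot\sqrt{G(\xi_1,\ldots,\xi_{n-1})}.
\]
The right-hand factor $\sqrt{G(\xi_1,\ldots,\xi_{n-1})}$ is by definition the Cartan $(n-1)$-dimensional area of the parallelepiped spanned by the $\xi_j$. Because $\beta(\xi_1,\ldots,\xi_{n-1})=1$ (the top $(n-1)\times(n-1)$ block of the $\xi_j$ is the identity), and because the area Lagrangian is characterized by $d\sigma=L\beta$, this area is precisely $L$.

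Equating the two evaluations of the same Cartan volume yields $\ell\cdot L=L\sqrt{g}$, hence $\ell=\sqrt{g}$, which is the claim. The step I expect to require the most care is the justification that $\sqrt{G(\xi_1,\ldots,\xi_{n-1})}=L$: one must verify that the metric used in Consequence 3.2 at the point $(x,T_x\Sigma)$ is consistent with the interpretation of $L$ as the area density coming from $F$, so that the Gram area of the coordinate tangent frame of $\Sigma$ really reproduces the Lagrangian. Once this compatibility is granted, the computational steps are short and direct.
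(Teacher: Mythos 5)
Your proof is correct and follows essentially the same route as the paper: both arguments compute the $n$-volume of the parallelepiped $(v,\xi_1,\ldots,\xi_{n-1})$ once as $\ell$ times the base area and once as $\sqrt{g}$ times the coordinate determinant (via Consequences 1--2), and both identify the base area with $L$ --- the paper by expanding $d\sigma=\sum_\imath(-1)^{\imath-1}\frac{\partial F}{\partial\xi_\imath}\,e_1^\ast\wedge\cdots\wedge e_{\imath-1}^\ast\wedge e_{\imath+1}^\ast\wedge\cdots\wedge e_n^\ast$ against the tangent frame, you by evaluating $d\sigma=L\beta$ on it directly, which amounts to the same cofactor computation. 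The compatibility issue you flag at the end is exactly the point the paper settles by decree, taking $d\sigma$ itself as the definition of the volume of the tangent parallelepiped rather than deriving it from the Gram determinant of $g$.
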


\begin{proof}
Note that the subspace generated by $n-1$ vectors $p_\imath=
(p^1_\imath,...,p^n_\imath)$ for $\imath=1,...,n-1$ is the tangent space of the
hypersurface $\Sigma$ and his volume is $d\sigma$ the volume of the parllelepide
of $n$ dimension spanned by $\Sigma$ and $v$ is $\mathcal{V}= \ell d\sigma$, to
simplify the calculation, we introduce the variables $\xi_1,...,\xi_n$ as
$\xi_n=-\frac{\xi_1}{p_1}=...=-\frac{\xi_{n-1}}{p_{n-1}}$ and we denote the
function $F$ as $\xi_nL(x^1,...,x^n;
\frac{\xi_1}{\xi_n},...,-\frac{\xi_{n-1}}{\xi_n})=F(x^1,...,x^n;\xi_1,...\xi_n)$
and like $F$ homogeneous and of degree 1 $\xi_1,...,\xi_n$ then we right (3):
\begin{equation}
v=\left(\frac{\partial F}{\partial \xi_1},...,\frac{\partial F}{\partial
\xi_n}\right)
\end{equation}
secondly according to one of the previous consequence then:
\[
\mathcal{V}=\sqrt{g}\left\vert
\begin{array}{ccc}
\frac{\partial F}{\partial \xi_1}&\ldots&\frac{\partial F}{\partial \xi_n}\\
\xi^1_1&\ldots&\xi^n_1\\
\vdots&\ddots&\vdots\\
\xi^1_{n-1}&\ldots&\xi^n_{n-1}\\
\end{array}
\right\vert
\]
If we denote $(e_1^\ast,...,e_n^\ast)$ the dual basis then the element of
surface $d\sigma =\sum_{\imath=1}^n(-1)^{\imath-1}\frac{\partial F}{\partial
\xi_\imath}e_1^\ast\wedge...\wedge e_{\imath-1}^\ast\wedge
e_{\imath+1}^\ast...\wedge e_n^\ast$, now it remains to calculate:
\[
d\sigma(\xi_1,...,\xi_{n-1})=\sum_{\imath=1}^n\frac{\partial F}{\partial
\xi_\imath} e_1^\ast \wedge...\wedge e_{\imath-1}^\ast\wedge
e_{\imath+1}^\ast...\wedge e_n^\ast (\xi_1,...,\xi_{n-1})
\]
\[
  =\sum_{\imath=1}^n(-1)^{\imath-1}\frac{\partial
F}{\partial\xi_\imath}\left\vert
\begin{array}{ccc}
\xi^1_1&\ldots&\xi^1_{n-1}\\
\vdots&\ddots&\vdots\\
\xi^{\imath-1}_1&\ldots&\xi^{\imath-1}_{n-1}\\
\xi^{\imath+1}_1&\ldots&\xi^{\imath+1}_{n-1}\\
\vdots&\ddots&\vdots\\
\xi^n_1&\ldots&\xi^n_{n-1}\\
\end{array}
\right\vert
=
\left\vert
\begin{array}{ccc}
\frac{\partial F}{\partial\xi_1}&\ldots&\frac{\partial F}{\partial\xi_n}\\
\xi^1_1&\ldots&\xi^n_1\\
\vdots&\ddots&\vdots\\
\xi^1_{n-1}&\ldots&\xi^n_{n-1}\\
\end{array}
\right\vert.
\]
which giving $\ell=\sqrt{g}$.
\end{proof}

\begin{cons}
The components of $\nu$ on dual basis are:
\[
\sqrt{g}\left(\frac{\xi_1}{F},...,\frac{\xi_n}{F}\right).
\]
\end{cons}

\begin{proof}
Denote respectively $\ell^\imath$ and $\ell_\imath$ the components of $\nu$ in
the basis and in the dual basis, then use (4) we have
$\ell^\imath=\frac{1}{\sqrt{g}}\frac{\partial F}{\partial\xi_\imath}$, and like
$\nu$ is normal then $\ell^\imath\ell_\imath=1$, or $F$ is homogeneous of degree
one in $\xi_\imath$ then
\[
\frac{1}{\sqrt{g}}\frac{\partial
F}{\partial\xi_\imath}\xi_\imath=\frac{1}{\sqrt{g}}F
\Rightarrow\frac{1}{\sqrt{g}}\frac{\partial F}{\partial\xi_\imath}\sqrt{g}
\frac{\xi_\imath}{F}=1
\]
which gives the normal component of unit vector in the dual basis.
\end{proof}

\newpage
\nocite{*}
\bibliographystyle{plain}
\bibliography{references}

\end{document}